\documentclass[11pt]{article}
\usepackage[margin=1in]{geometry}
\usepackage{amsmath,amssymb,amsthm,mathtools,booktabs,microtype}
\usepackage[hidelinks]{hyperref}
\usepackage{enumitem}
\setlist{nosep}
\newtheorem{theorem}{Theorem}
\newtheorem{lemma}[theorem]{Lemma}

\newtheorem{corollary}[theorem]{Corollary}
\theoremstyle{remark}

\newcommand{\Z}{\mathbb Z}
\newcommand{\1}{\mathbf 1}
\newcommand{\T}{\mathcal T}

\title{Latin Squares with Few Transversals}
\author{Zur Luria}
\date{}

\begin{document}
\maketitle
\vspace{-1.5em}

\begin{abstract}
Let $t(n)$ denote the minimum number of transversals in a Latin square of odd order $n$. Improving upon a recent bound of Dai, Divoux and Kelly, we prove that for every $n$ such that $n \equiv 3 \pmod 6$,
\[
t(n) \leq \left( \left(1+o(1)\right) \frac{2n}{3e^2}\right)^n .
\]
Our proof is based on a family of $3 \times 3$ block Latin squares whose transversals are constrained to either lie entirely in the diagonal blocks or avoid them altogether. 
\end{abstract}

\section{Introduction}

A transversal of a Latin square of order $n$ is a set of $n$ entries containing one entry from each row and column, and one occurrence of each symbol. When $n$ is even, the cyclic Latin square defined by
\[
L(i,j)=i+j\pmod n
\]
has no transversals. In striking contrast, the celebrated Ryser conjecture asserts that every Latin square of odd order has at least one transversal.

Following Wanless~\cite{WanlessSurvey}, let $T(n)$ and $t(n)$ denote, respectively, the maximum and minimum number of transversals in a Latin square of order $n$. In his survey on transversals in Latin squares, Wanless explicitly raised the problem of obtaining nontrivial upper bounds on $t(n)$ for odd $n$.

The corresponding extremal problem for the maximum is by now well understood. An upper bound of Taranenko, together with a matching probabilistic construction of Glebov and Luria, gives
\[
 T(n)=\left((1+o(1))\frac{n}{e^2}\right)^n
 \qquad\text{\cite{Taranenko,GlebovLuria}.}
\]
This naturally raised the question of whether $t(n)$ might have the same exponential order. There was some evidence in this direction. Kwan showed that a uniformly random Latin square typically has at least
\[
 \left((1-o(1))\frac{n}{e^2}\right)^n
\]
transversals~\cite{Kwan}; together with the upper bound above, this determines the number of transversals of a typical Latin square on the exponential scale. Eberhard, Manners and Mrazovi\'c obtained the same exponential order for the Cayley tables of groups of odd order~\cite{EMM}.

The first exponential separation between $t(n)$ and $T(n)$ was obtained recently by Dai, Divoux and Kelly~\cite{DDK}. Their starting point was a construction of Egan and Wanless~\cite{EganWanless}: for every odd $q\geq 3$, there is a Latin square of order $3q$ containing a $(q-1)\times q$ subrectangle none of whose entries lies in a transversal. Combining this construction with an entropy bound for perfect matchings, Dai, Divoux and Kelly proved that
\[
 t(n)\leq \left(\frac{n}{e^{2.117}}\right)^n
\]
for all sufficiently large odd $n$ divisible by three.

In this note we give a simple construction which yields a stronger bound. For every odd $q$, we construct a Latin square $L_q$ of order $n=3q$ such that
\[
 |\T(L_q)|
 \leq
 \left((1+o(1))\frac{2n}{3e^2}\right)^n.
\]
Consequently, as $n\to\infty$ through integers satisfying $n\equiv3\pmod6$,
\[
 t(n)\leq
 \left(
 \frac{n}{e^{\,2+\log(3/2)-o(1)}}
 \right)^n.
\]

The construction is a $3\times3$ array of cyclic Latin squares of order $q$, differing from the direct product of the cyclic squares of orders $3$ and $q$ in only one block, which is shifted by one. The purpose of this modification is not to make that block itself difficult to traverse. Rather, the shift leaves a residue in the sum of the symbols of any transversal, forcing a global dichotomy: every transversal either lies entirely in the three diagonal blocks or avoids all three of them. Once this dichotomy is established, the required estimate follows directly from a bound for perfect matchings in regular hypergraphs~\cite{LuriaQueens}.

\begin{theorem}\label{thm:main}
As $n\to\infty$ through integers satisfying $n\equiv3\pmod6$,
\[
 t(n)\leq
 \left((1+o(1))\frac{2n}{3e^2}\right)^n.
\]
\end{theorem}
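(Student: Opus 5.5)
The plan is to construct an explicit square $L_q$ of order $n=3q$ ($q$ odd), prove that every transversal either lies entirely in the three diagonal blocks or avoids all of them, and then bound the two kinds of transversals separately.

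\textbf{Construction.} Index rows by pairs $(a,x)$ and columns by pairs $(b,y)$, where $a,b\in\Z_3$ and $x,y\in\Z_q$. Use the symbol set $\Z_3\times\Z_q$ and set
\[
L_q\bigl((a,x),(b,y)\bigr)=\bigl(a+b,\;x+y+s(a,b)\bigr),
\]
where $s(0,0)=1$ and $s(a,b)=0$ otherwise. Block $(a,b)$ is a shifted cyclic square on the symbols $\{a+b\}\times\Z_q$. Shifting a cyclic square keeps it Latin, so $L_q$ is a Latin square.

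\textbf{Step 1: the residue argument.} Let $\tau$ be a transversal and let $m_{ab}$ be the number of its cells in block $(a,b)$. We sum the second coordinates of the symbols of $\tau$ modulo $q$ in two ways.
\begin{itemize}
\item Over the symbols: every symbol appears once, so the sum is $3\binom q2\equiv 0$, since $q$ is odd.
\item Over the cells: the $x$'s and $y$'s each run over $\Z_q$ three times, contributing $0$, and the shift contributes $m_{00}$.
\end{itemize}
Hence $m_{00}\equiv 0\pmod q$, so $m_{00}\in\{0,q\}$.

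\textbf{Step 2: the dichotomy.} The $m_{ab}$ satisfy three families of constraints, each sum equal to $q$: the row-block sums $\sum_b m_{ab}$, the column-block sums $\sum_a m_{ab}$, and the symbol-class sums $\sum_{a+b=c} m_{ab}$.
\begin{itemize}
\item If $m_{00}=q$, then row block $0$ and column block $0$ are exhausted inside block $(0,0)$, and so is symbol class $0$. Hence $m_{12}=m_{21}=0$. The row sums for blocks $1$ and $2$ then force $m_{11}=m_{22}=q$, so $\tau$ lies in the diagonal blocks.
\item If $m_{00}=0$, add the row-$0$ and column-$0$ equations to get $m_{01}+m_{02}+m_{10}+m_{20}=2q$. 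The class equations give $m_{02}+m_{20}=q-m_{11}$ and $m_{01}+m_{10}=q-m_{22}$. Together these yield $m_{11}+m_{22}=0$, so $\tau$ avoids all diagonal blocks.
\end{itemize}

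\textbf{Step 3: counting.}

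\emph{Diagonal transversals.} A diagonal transversal is a triple of transversals of the three diagonal blocks, each of which is a Latin square of order $q$. So there are at most $T(q)^3=\bigl((1+o(1))q/e^2\bigr)^{3q}=\bigl((1+o(1))\tfrac{n}{3e^2}\bigr)^n$ of them, which is exponentially negligible against the target bound.

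\emph{Off-diagonal transversals.} Form the $3$-partite $3$-uniform hypergraph on rows $\cup$ columns $\cup$ symbols, with one edge for each off-diagonal cell. It is linear, since two cells share at most one of row, column, or symbol. It is $2q$-regular:
\begin{itemize}
\item each row lies in two off-diagonal blocks, with $q$ cells in each;
\item the same holds for each column;
\item symbol $(c,z)$ occurs in the three blocks with $a+b=c$, exactly one of which is diagonal, and it occurs $q$ times in each of the other two.
\end{itemize}
Off-diagonal transversals are exactly the perfect matchings of this hypergraph. I would invoke the bound of \cite{LuriaQueens} for $D$-regular linear $3$-uniform hypergraphs on $3n$ vertices: the number of perfect matchings is at most $\bigl((1+o(1))D/e^2\bigr)^n$. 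This is the same form that yields $T(n)\le((1+o(1))n/e^2)^n$ when $D=n$. With $D=2q=2n/3$ we obtain $\bigl((1+o(1))\tfrac{2n}{3e^2}\bigr)^n$.

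Adding the two counts proves Theorem~\ref{thm:main}. I expect the main obstacle to be Step 1–2, namely finding an invariant that forces the dichotomy. I also need to check that the cited hypergraph bound applies exactly in this regime: $D$ is linear in $n$, and linearity and regularity must match the hypotheses of \cite{LuriaQueens}.
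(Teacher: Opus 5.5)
Your proposal is correct and follows the paper's proof: the same construction, the same second-coordinate checksum giving $m_{00}\in\{0,q\}$, and the same regular-hypergraph matching bound applied to the $2q$-regular off-diagonal hypergraph. The differences are cosmetic. You obtain the dichotomy by direct case analysis rather than through the paper's block-profile lemma, which shows all diagonal entries equal $A$. You also bound the diagonal transversals by $T(q)^3$ instead of applying the matching bound to the $q$-regular diagonal hypergraph.
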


The author used OpenAI’s Sol model in developing this work. In particular, the model proposed the central construction and assisted with the exposition. The author verified all arguments and takes full responsibility for the contents of the paper.

\section{The construction}

Fix an odd integer $q$ and put $n=3q$. Index the rows, columns and symbols by $\Z_3\times\Z_q$. Define
\begin{equation}\label{eq:construction}
 L_q\big((i,a),(j,b)\big)
 =\big(i+j,\,a+b+\1_{\{i=j=0\}}\big),
\end{equation}
where the first coordinate is taken modulo $3$ and the second modulo $q$.
It is immediate from the definition that $L_q$ is a Latin square.

Let $C_q$ denote the cyclic Latin square whose $(a,b)$ entry is $a+b$, and let $C_q^+=C_q+1$, where the addition is modulo $q$. In block notation,
\begin{equation}\label{eq:block-square}
L_q=
\begin{pmatrix}
 (0,C_q^+) & (1,C_q) & (2,C_q)\\
 (1,C_q)   & (2,C_q) & (0,C_q)\\
 (2,C_q)   & (0,C_q) & (1,C_q)
\end{pmatrix}.
\end{equation}
Thus every block is cyclic, and only the upper-left block is shifted.

For example, when $q=5$ the two inner blocks are
\[
C_5=
\begin{pmatrix}
0&1&2&3&4\\
1&2&3&4&0\\
2&3&4&0&1\\
3&4&0&1&2\\
4&0&1&2&3
\end{pmatrix},
\qquad
C_5^+=
\begin{pmatrix}
1&2&3&4&0\\
2&3&4&0&1\\
3&4&0&1&2\\
4&0&1&2&3\\
0&1&2&3&4
\end{pmatrix}.
\]
Substituting these into~\eqref{eq:block-square} gives a Latin square of order $15$.

\section{Block profiles}

Partition $L_q$ into the nine $q\times q$ blocks appearing in~\eqref{eq:block-square}, and denote the block in position $(i,j)$ by $B_{ij}$, where $i,j\in\Z_3$.

We call the three blocks $B_{i0},B_{i1},B_{i2}$ the $i$th \emph{block row}, and define block columns similarly. We will also use the \emph{block symbol classes}: for $s\in\Z_3$, the block symbol class $s$ consists of the three blocks $B_{ij}$ with $i+j=s$. Every entry in these three blocks has first symbol coordinate $s$.

Now let $T$ be a transversal of $L_q$, and let
\[
x_{ij}=|T\cap B_{ij}|.
\]
Write $X=(x_{ij})$ for this $3\times3$ matrix, which we call the \emph{block profile} of $T$.

Since $T$ contains one entry from each row of $L_q$, it contains exactly $q$ entries in each block row. Likewise, it contains exactly $q$ entries in each block column. Finally, for each $s\in\Z_3$, there are exactly $q$ symbols whose first coordinate is $s$, and each occurs once in $T$. Hence $T$ contains exactly $q$ entries in block symbol class $s$. Thus
\begin{equation}\label{eq:line-sums}
 \sum_jx_{ij}=q,\qquad
 \sum_ix_{ij}=q,\qquad
 \sum_{i+j=s}x_{ij}=q.
\end{equation}

\begin{lemma}[The block profile]\label{lem:profile}
There are nonnegative integers $A,B,C$ with $A+B+C=q$ such that
\begin{equation}\label{eq:profile}
X=
\begin{pmatrix}
 A&C&B\\
 B&A&C\\
 C&B&A
\end{pmatrix}.
\end{equation}
\end{lemma}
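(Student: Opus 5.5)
The lemma follows from the line-sum identities \eqref{eq:line-sums} alone; the shift in the upper-left block plays no role here. I would show directly that $x_{ij}$ depends only on the difference $j-i\in\Z_3$. Once that is established, set $A=x_{00}$ for $j-i=0$, $C=x_{01}$ for $j-i=1$ and $B=x_{02}$ for $j-i=2$; this is exactly the pattern \eqref{eq:profile}. Any row sum in \eqref{eq:line-sums} then gives $A+B+C=q$, and nonnegativity and integrality are inherited from the $x_{ij}$.

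\textbf{Key identity.} Fix a cell $(i,j)$ and combine four of the constraints: add the row-$i$ sum and the column-$j$ sum, and subtract the two block symbol class sums for the classes $s\neq i+j$. The right-hand side is $q+q-q-q=0$.

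On the left, the row and column contribute $2x_{ij}$ plus the four cells $(i,j')$ and $(i',j)$ with $j'\neq j$ and $i'\neq i$. Each of these four cells lies in a class different from $i+j$. The two subtracted classes contain six cells in total, and exactly those four lie in row $i$ or column $j$. The remaining two cells have $i'\neq i$ and $j'\neq j$. Among the four such cells, the two with $i'+j'=i+j$ belong to the excluded class. So the two that remain are the other two cells with $j'-i'=j-i$. For example, for $(0,0)$ they are $(1,1)$ and $(2,2)$, and the identity reads $2x_{00}=x_{11}+x_{22}$. In general I expect
\[
2x_{ij}=x_{i+1,j+1}+x_{i+2,j+2}.
\]
Checking this cancellation carefully for a general $(i,j)$, rather than only for $(0,0)$, is the only real bookkeeping. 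I would do it by the counting above, or simply by listing the three cases $j-i=0,1,2$.

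\textbf{Conclusion.} The identity says that, within each broken diagonal $\{(i,j): j-i=d\}$, every entry equals the average of the other two. Take the largest entry of that diagonal. It equals the average of the other two, which are each at most it, so both must equal it. Hence all three entries on each broken diagonal coincide, which gives the claimed form of $X$.
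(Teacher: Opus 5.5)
Your proof is correct, but it takes a different route from the paper's. The paper guesses the answer first. It sets $A=x_{00}$, $C=x_{01}$, $B=x_{02}$ and notes that the circulant matrix in \eqref{eq:profile} satisfies \eqref{eq:line-sums}. It then proves uniqueness by showing that a difference matrix with zero first row and zero row, column and block-symbol sums must vanish: the column sums give rows $(p,r,s)$ and $(-p,-r,-s)$, the class sums force $p=r=s$, and a row sum gives $3p=0$.

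You instead derive the structure without knowing it in advance. You use the local identity $2x_{ij}=x_{i+1,j+1}+x_{i+2,j+2}$ and close with an extremal argument on each broken diagonal. Your cancellation count is sound. The four cells outside row $i$ and column $j$ are $(i+a,j+b)$ with $a,b\in\{1,2\}$, and exactly those with $a\neq b$ lie in class $i+j$, so the two surviving cells are those with $a=b$.

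The paper's route is a shorter computation and makes explicit that the first row determines $X$. Yours explains why the entries are constant along broken diagonals and treats all nine cells symmetrically.

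You could also trim the bookkeeping in two places. First, the system \eqref{eq:line-sums} is invariant under translations $(i,j)\mapsto(i+a,j+b)$ of $\Z_3^2$, which only permute rows, columns and classes. So your check at $(0,0)$ already gives the identity for every cell. Second, instead of the maximum argument, subtracting two of the identities on a diagonal gives $3(x_{0,d}-x_{1,1+d})=0$ directly.
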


\begin{proof}
Set
\[
A=x_{00},\qquad C=x_{01},\qquad B=x_{02}.
\]
The first block-row equation gives $A+B+C=q$. The matrix on the right-hand side of~\eqref{eq:profile} satisfies all the equations in~\eqref{eq:line-sums}, so it remains only to show uniqueness.

Subtract this matrix from $X$. Since both matrices satisfy the row, column and block-symbol sum equations in~\eqref{eq:line-sums}, their difference has sum zero on every block row, block column and block-symbol class. As its first row is zero and its column sums are zero, it has the form
\[
\begin{pmatrix}
0&0&0\\
p&r&s\\
-p&-r&-s
\end{pmatrix}.
\]
Now consider the three block-symbol classes, that is, the sets of positions with $i+j$ fixed modulo $3$. Their sums are zero, and hence
\[
s-r=0,\qquad p-s=0,\qquad r-p=0.
\]
Thus $p=r=s$. The second row sum is also zero, so $3p=0$, and therefore $p=r=s=0$.
\end{proof}

We now use the shift in the block $B_{00}$. For an entry $e\in L_q$, write
$r_2(e)$, $c_2(e)$ and $s_2(e)$ for the second coordinates of its row, column and symbol. By~\eqref{eq:construction},
\[
 s_2(e)
 \equiv
 r_2(e)+c_2(e)
 +\1_{\{e\in B_{00}\}}
 \pmod q.
\]
Summing this identity over the entries of a transversal $T$ gives
\begin{equation}\label{eq:checksum}
 \sum_{e\in T}s_2(e)
 \equiv
 \sum_{e\in T}r_2(e)
 +\sum_{e\in T}c_2(e)
 +x_{00}
 \pmod q.
\end{equation}

The first three sums are determined independently of $T$. Indeed, among the rows of $L_q$, each element of $\Z_q$ occurs exactly three times as a second coordinate, and $T$ uses every row exactly once. Hence
\[
\sum_{e\in T}r_2(e)
=
3\sum_{a=0}^{q-1}a.
\]
The same argument applies to columns and symbols. Since $q$ is odd,
\[
3\sum_{a=0}^{q-1}a
=
\frac{3q(q-1)}2
\equiv0\pmod q.
\]
Equation~\eqref{eq:checksum} therefore reduces to
\[
x_{00}\equiv0\pmod q.
\]
By Lemma~\ref{lem:profile}, $x_{00}=A$. Since $0\leq A\leq q$, we conclude that
\[
A\in\{0,q\}.
\]

This is the key effect of the shifted block. The block-profile constraints first force the three diagonal blocks to carry the same number $A$ of transversal entries. The shift in just one of them then detects $A$ modulo $q$, forcing that common value to be either zero or as large as possible.

\begin{corollary}[The dichotomy]\label{cor:dichotomy}
Every transversal of $L_q$ either lies entirely in the three diagonal blocks or avoids all three diagonal blocks.
\end{corollary}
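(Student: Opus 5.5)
The corollary follows by combining Lemma~\ref{lem:profile} with the congruence $x_{00}\equiv 0 \pmod q$ derived just above. The plan is to use the profile to identify the number of transversal entries in each diagonal block, and then read off the two cases from $A\in\{0,q\}$.

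First, by Lemma~\ref{lem:profile} the block profile of any transversal $T$ has diagonal entries
\[
x_{00}=x_{11}=x_{22}=A.
\]
So the three diagonal blocks $B_{00},B_{11},B_{22}$ each contain exactly $A$ entries of $T$. The checksum argument based on~\eqref{eq:checksum} gives $A=x_{00}\equiv 0\pmod q$, and together with $0\le A\le q$ this yields $A\in\{0,q\}$.

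We then split into these two cases.
\begin{itemize}
\item If $A=0$, then $x_{00}=x_{11}=x_{22}=0$, so $T$ avoids all three diagonal blocks.
\item If $A=q$, then $A+B+C=q$ with $B,C\ge 0$ forces $B=C=0$. The profile matrix in~\eqref{eq:profile} is then $q$ times the identity, so every off-diagonal block contains no entry of $T$. Since $|T|=3q=x_{00}+x_{11}+x_{22}$, all of $T$ lies in the diagonal blocks.
\end{itemize}

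There is no real obstacle: all the substantive work, namely the uniqueness of the profile and the checksum forcing $A\in\{0,q\}$, has already been done. The only point needing care is that in the case $A=q$ nonnegativity kills $B$ and $C$, which is immediate.
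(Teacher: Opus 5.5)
Your proof is correct and matches the paper's argument: Lemma~\ref{lem:profile} gives the common diagonal count $A$, the checksum congruence forces $A\in\{0,q\}$, and the two cases follow, with nonnegativity giving $B=C=0$ when $A=q$. Your extra remark that $x_{00}+x_{11}+x_{22}=3q=|T|$ is harmless but redundant, since $B=C=0$ already empties every off-diagonal block.
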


\begin{proof}
If $A=q$, then $A+B+C=q$ implies $B=C=0$, so the transversal is supported entirely on the three diagonal blocks. If $A=0$, then all three diagonal entries of the block profile vanish, so the transversal avoids the diagonal blocks.
\end{proof}

\section{Counting transversals}

We use the following bound for perfect matchings in regular hypergraphs~\cite[Theorem~3.1]{LuriaQueens}.

\begin{theorem}\label{thm:matching}
Let $d$ be fixed, and let $H$ be a $d$-uniform, $k$-regular hypergraph on $N$ vertices, where $k\to\infty$ as $N\to\infty$ and the maximum codegree of $H$ is $o(k)$. Then the number of perfect matchings in $H$ is at most
\[
\left((1+o(1))\frac{k}{e^{d-1}}\right)^{N/d}.
\]
\end{theorem}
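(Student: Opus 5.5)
I would prove Theorem~\ref{thm:matching} by the entropy method, in the style of Radhakrishnan's proof of Br\`egman's theorem. Let $\mathbf M$ be a uniformly random perfect matching of $H$, so that $\log(\#\text{perfect matchings})=H(\mathbf M)$. Independently, give each edge $e$ of $\mathbf M$ a uniform random time $\tau_e\in[0,1]$. For each edge $e\in\mathbf M$, also fix a distinguished vertex $v_e\in e$ by a deterministic rule, say the smallest vertex. We reveal the edges of $\mathbf M$ in increasing order of time. By the chain rule,
\[
H(\mathbf M)=\mathbb E_\tau\sum_{e\in\mathbf M} H\bigl(e\mid \text{edges with earlier times}\bigr),
\]
where the sum is understood in the standard way. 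One indexes by vertices: for each vertex $v$, record the matching edge covering $v$ when $v$ is the distinguished vertex of that edge. This way the conditioning is well defined even though $\mathbf M$ itself is random.

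Given the previously revealed edges, the edge $e$ containing $v=v_e$ is one of the edges $f\ni v$ disjoint from all previously revealed edges. Writing $N_e$ for the number of such $f$, we get $H(e\mid\text{past})\le \mathbb E\log N_e$. By Jensen's inequality, after conditioning on $\tau_e=t$, this is at most
\[
\mathbb E_t \log \mathbb E[N_e\mid \mathbf M,\tau_e=t].
\]
The key computation, done for a fixed matching $M$, is as follows. Take an edge $f\ni v$ with $f\neq e$ that meets $e$ only in $v$ and meets $d-1$ distinct other edges of $M$. Such an $f$ is counted iff those $d-1$ edges all have times later than $t$, which happens with probability exactly $(1-t)^{d-1}$. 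The remaining edges $f\ni v$ are those sharing a second vertex with $e$, or meeting some edge of $M$ in two vertices. I would bound their number by $O(d^2)$ times the maximum codegree, which is $o(k)$. This bound on the exceptional edges is the step I expect to need the most care. Putting these together,
\[
\mathbb E[N_e\mid M,\tau_e=t]\le 1+k(1-t)^{d-1}+o(k).
\]
This bound is uniform in $M$.

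Integrating over $t$, each edge of $\mathbf M$ contributes at most
\[
\int_0^1\log\bigl(k(1-t)^{d-1}+o(k)+1\bigr)\,dt=\log k-(d-1)+o(1).
\]
Here I use $k\to\infty$ and dominated convergence. Near $t=1$ the integrand is bounded below by $0$ and above by $\log(2k)$, and the region where $(1-t)^{d-1}$ is comparable to the error term $o(k)/k$ has measure $o(1)$. Summing over the $N/d$ edges of $\mathbf M$ gives
\[
H(\mathbf M)\le \frac Nd\bigl(\log k-(d-1)+o(1)\bigr),
\]
which is the claimed bound $\bigl((1+o(1))k/e^{d-1}\bigr)^{N/d}$.

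The main obstacle is bookkeeping rather than ideas. First, the chain rule must be justified for a random index set; indexing by vertices with a deterministic choice of distinguished vertex handles this. Second, the codegree hypothesis must control the exceptional edges $f$ uniformly in $\mathbf M$, so that the error inside the logarithm is $o(k)$ and not merely $o(k)$ on average.
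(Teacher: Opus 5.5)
\textbf{There is a genuine gap.} The paper gives no proof of this statement; it quotes it from \cite{LuriaQueens}. So your argument has to stand on its own, and it breaks exactly at the step you call bookkeeping.

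With a \emph{deterministic} distinguished vertex, your vertex-indexed variables are $Y_v=\mathbf M(v)$ if $v=\min\mathbf M(v)$ and $Y_v=\ast$ otherwise, revealed in order of independent times $\tau_v$. The chain rule then gives $H(\mathbf M)=\mathbb E_\tau\sum_{v\in V}H(Y_v\mid\text{past})$, a sum over \emph{all} $N$ vertices. The terms at non-distinguished vertices do not vanish. If $v$ is not yet covered by a revealed edge, the past does not in general tell you whether $v$ is the smallest vertex of $\mathbf M(v)$. So the support of $Y_v$ given the past contains $\ast$ together with every available edge whose smallest vertex is $v$. Your bound $H(e\mid\text{past})\le\mathbb E\log N_e$ tacitly conditions on $v$ being distinguished, which is information about $\mathbf M$ that is not in the past.

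The neglected terms are not lower order. Take $d=2$ and the circulant graph on $\mathbb Z_N$ with $v\sim v\pm1,\dots,v\pm k/2$, labelled $0,\dots,N-1$. About $N/4$ vertices are non-distinguished yet uncovered at their own time. Bounding all terms by the number of available values, as your method does, gives roughly $\tfrac34 N\log k$ instead of $\tfrac12 N\log k$. Your scheme is fine if $H$ is $d$-partite and $v_e$ is always taken in a fixed part, as in the paper's application, since then $Y_v$ is constant off that part.

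The standard repair is to make the distinguished vertex random. Give i.i.d.\ uniform times to all vertices, independently of $\mathbf M$, and reveal $\mathbf M(v)$ for every $v$ in time order. A vertex that is not first in its edge then has $\mathbf M(v)$ determined by the past, so only first vertices contribute. The first vertex's time has density $d(1-t)^{d-1}$, and a generic $f\ni v$ is available with probability $(1-t)^{d(d-1)}$. The substitution $s=1-(1-t)^d$ turns $N\int_0^1(1-t)^{d-1}\log\bigl(k(1-t)^{d(d-1)}\bigr)\,dt$ into exactly your $\frac Nd\int_0^1\log\bigl(k(1-s)^{d-1}\bigr)\,ds$.

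Second, your claim that the exceptional edges at $v$ number $O(d^2)$ times the maximum codegree $\Delta$, uniformly in $\mathbf M$, is false for edges $f\ni v$ meeting another edge of $\mathbf M$ in two vertices. The codegree hypothesis bounds the edges through a \emph{fixed} pair, but here the pair ranges over all pairs inside the $N/d$ edges of $\mathbf M$. For $d=3$, nothing prevents almost every link pair $\{a,b\}$ of $v$ from lying in an edge $\{a,b,c\}\in\mathbf M$; this needs only codegree $2$. Then almost all $k$ edges at $v$ are exceptional, and each is available with probability $1-t$ in your parametrization rather than $(1-t)^2$.

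What does hold is an average bound. Each edge of $H$ containing two vertices of a common edge of $\mathbf M$ is charged to at most $d$ vertices. Hence $\sum_v x_v(\mathbf M)\le\bigl(d-1+\binom d2\bigr)N\Delta=o(Nk)$, where $x_v(\mathbf M)$ counts the exceptional edges at $v$. Since $\log$ is concave, Jensen's inequality over $v$, for fixed $t$ and $\mathbf M$, puts this average inside the logarithm. So the uniform control you say is required is neither available nor needed.

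Finally, for the limit of the integral, subtract $\log k$ first and apply monotone convergence to $\log\bigl((1-t)^{d-1}+\varepsilon+1/k\bigr)$. Bounding the integrand by $\log(2k)$ on a set of measure $o(1)$ does not by itself give an $o(1)$ error.
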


We represent $L_q$ as a $3$-partite $3$-uniform hypergraph whose vertex classes are the rows, columns and symbols of $L_q$, with one edge for each entry of the square. Under this correspondence, the transversals of $L_q$ are precisely the perfect matchings of the hypergraph. Moreover, any two vertices have codegree at most one.

Let $\T_{\rm diag}$ denote the set of transversals lying entirely in the three diagonal blocks. Restricting the hypergraph to the entries in these blocks gives a $q$-regular $3$-uniform hypergraph on $3n$ vertices. By Theorem~\ref{thm:matching},
\[
|\T_{\rm diag}|
\leq
\left((1+o(1))\frac{q}{e^2}\right)^n
=
\left((1+o(1))\frac{n}{3e^2}\right)^n.
\]

Similarly, let $\T_{\rm off}$ denote the set of transversals lying entirely in the six off-diagonal blocks. Restricting to these blocks gives a $2q$-regular $3$-uniform hypergraph on the same $3n$ vertices, and hence
\[
|\T_{\rm off}|
\leq
\left((1+o(1))\frac{2q}{e^2}\right)^n
=
\left((1+o(1))\frac{2n}{3e^2}\right)^n.
\]

By Corollary~\ref{cor:dichotomy}, every transversal of $L_q$ belongs to exactly one of these two classes. Therefore
\[
\begin{aligned}
|\T(L_q)|
&=
|\T_{\rm diag}|+|\T_{\rm off}|\\
&\leq
\left((1+o(1))\frac{n}{3e^2}\right)^n
+
\left((1+o(1))\frac{2n}{3e^2}\right)^n\\
&=
\left((1+o(1))\frac{2n}{3e^2}\right)^n.
\end{aligned}
\]
This proves Theorem~\ref{thm:main}.

\end{document}